\documentclass[11pt]{article}
\usepackage{latexsym,amssymb,amsmath,amsthm,enumerate,geometry,cite,float,tikz}
\geometry{a4paper,left=2cm,right=2cm, top=2cm, bottom=2cm}
\newtheorem{theorem}{Theorem}
\newtheorem{lemma}[theorem]{Lemma}

\newtheorem{claim}{Claim}

\usepackage{lineno}
\usepackage{setspace}
\allowdisplaybreaks

\begin{document}
%\linenumbers
\onehalfspace

\title{Diameter, edge-connectivity, and $C_4$-freeness}
\author{Vanessa Hiebeler\and Johannes Pardey\and Dieter Rautenbach}
\date{}

\maketitle
\vspace{-10mm}
\begin{center}
{\small 
Institute of Optimization and Operations Research, Ulm University, Ulm, Germany\\
\texttt{$\{$vanessa.hiebeler,johannes.pardey,dieter.rautenbach$\}$@uni-ulm.de}
}
\end{center}

\begin{abstract}
Improving a recent result of Fundikwa, Mazorodze, and Mukwembi,
we show that $d \leq (2n-3)/5$ 
for every connected $C_4$-free graph of 
order $n$, 
diameter $d$, 
and edge-connectivity at least $3$,
which is best possible up to a small additive constant.
For edge-connectivity at least $4$, we improve this to $d \leq (n-3)/3$.
Furthermore, adapting a construction due to Erd\H{o}s, Pach, Pollack, and Tuza, 
for an odd prime power $q$ at least $7$, and every positive integer $k$, 
we show the existence of a 
connected $C_4$-free graph of 
order $n=(q^2+q-1)k+1$, 
diameter $d=4k$, 
and edge-connectivity $\lambda$ at least $q-6$, in particular,
$d\geq 4(n-1)/(\lambda^2+O(\lambda))$.\\[3mm]
{\bf Keywords:} Diameter; edge-connectivity; $C_4$-freeness; Brown graph
\end{abstract}

\section{Introduction}

All graphs considered here are finite, simple, and undirected.
The fact that a connected graph 
of minimum degree $\delta$ and diameter $d$ 
has order at least $\frac{d(\delta+1)}{3}+O(1)$
has been discovered several times \cite{amfoge,erpapotu,gomafa,mo}.
Lower bounds on the order of graphs realizing a certain diameter
subject to conditions such as
triangle-freeness \cite{erpapotu,fumamu2},
$C_4$-freeness \cite{erpapotu,fumamu}, or 
conditions involving the chromatic number \cite{czsisz,czsisz2}
or the edge-connectivity have been studied \cite{almamuve,fumamu,fumamu2}.
Similar problems for notions related to the diameter also received attention
\cite{dajoma,daosmuro,fumamu3}.

In the present paper we relate the order and the diameter 
for $C_4$-free graphs subject to edge-connectivity conditions.
Let $G$ be a connected graph of 
order $n$, 
minimum degree $\delta\geq 2$, 
diameter $d$, 
and edge-connectivity $\lambda$ that is $C_4$-free,
that is, the graph $G$ contains no cycle of length four as a 
(not necessarily induced) subgraph.
Exploiting the simple fact 
that there are at least $\delta^2-2\left\lfloor\frac{\delta}{2}\right\rfloor+1$
vertices within distance at most $2$ from every vertex of $G$,
Erd\H{o}s et al.~\cite{erpapotu} showed
\begin{eqnarray}\label{e1}
d & \leq & \frac{5n}{\delta^2-2\left\lfloor\frac{\delta}{2}\right\rfloor+1}.
\end{eqnarray}
Furthermore, they showed that, provided that $q=\delta+1$ is a prime power,
there is a graph $G$,
derived from the so-called {\it Brown graph $B(q)$} \cite{br,erre,basi},
with the above properties that satisfies
\begin{eqnarray}\label{e1b}
d & \geq & \frac{5n}{\delta^2+3\delta+2}-1,
\end{eqnarray}
that is, asymptotically, there is only little room for improvements of (\ref{e1}).
If $\lambda\geq 3$, then $\delta\geq 3$, and (\ref{e1}) implies $d\leq \frac{5n}{8}$,
which was recently improved by Fundikwa et al.~\cite{fumamu} to
\begin{eqnarray}\label{e2}
d & \leq & \frac{3n-3}{7}.
\end{eqnarray}
They also constructed graphs with 
$d \geq \frac{3n-5}{8}$
for infinitely many $n$,
that is, the factor $3/7$ in (\ref{e2}) can not be improved beyond $3/8$.

Our first result is the following improvement of (\ref{e2}).

\begin{theorem}\label{theorem1}
If $G$ is a connected $C_4$-free graph of 
order $n$, 
diameter $d$, 
and edge-connectivity at least $3$,
then
\begin{eqnarray}\label{e3}
d & \leq & \frac{2n-3}{5}.
\end{eqnarray}
\end{theorem}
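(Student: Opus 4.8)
The plan is to fix a shortest path $P\colon v_0 v_1 \cdots v_d$ between two vertices at distance $d$ and to count the vertices of $G$ through the distance layers $L_i=\{u\in V(G): \mathrm{dist}(v_0,u)=i\}$ from $v_0$, writing $a_i=|L_i|$. Since $v_i\in L_i$ we have $a_i\ge 1$ and $n=\sum_{i=0}^{d}a_i$, so $d\le (2n-3)/5$ is equivalent to the vertex count $\sum_{i=0}^d a_i\ge (5d+3)/2$; the whole argument is thus devoted to showing that the layers are, on average, of size at least $5/2$. First I would record the two structural facts that drive everything. From $C_4$-freeness, no two vertices have two common neighbours; applied to the path this yields that every vertex is adjacent to at most two of the $v_i$, which are then consecutive, and that $v_i$ and $v_{i+2}$ have $v_{i+1}$ as their only common neighbour. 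From $\lambda\ge 3$, the cut separating $L_0\cup\cdots\cup L_i$ from the remaining layers consists of at least three edges, all running between $L_i$ and $L_{i+1}$; hence there are at least three edges across every layer boundary.

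Next I would turn these into local constraints on the sequence $(a_i)$, the point being that small layers are expensive because of $C_4$-freeness. If $a_i=1$, say $L_i=\{v_i\}$, then every vertex of $L_{i-1}\cup L_{i+1}$ is adjacent to $v_i$ (each needs a neighbour in the single-vertex layer $L_i$), so all vertices of $L_{i+1}$ share the common neighbour $v_i$; by $C_4$-freeness no two of them may share a second neighbour, which forces every vertex of $L_{i+2}$ to have exactly one neighbour in $L_{i+1}$ and, together with $\lambda\ge 3$, makes $a_{i+2}\ge 3$. In particular a cheap pattern such as $\dots,3,1,3,1,\dots$ is impossible: the two size-$3$ layers flanking a size-$1$ layer cannot simultaneously be squeezed against another size-$1$ layer without creating a $C_4$. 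Quantifying this propagation — a size-$1$ layer forces its neighbours to size at least $3$ and forbids these enlarged layers from being reused by a second deficient layer — is what lifts the naive per-layer bound of $1$ towards the required average of $5/2$.

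The core step is then an amortized (discharging) count turning these local constraints into $\sum_{i} a_i\ge (5d+3)/2$. I would assign to each boundary between $L_i$ and $L_{i+1}$ a charge reflecting its at least three crossing edges and redistribute charge from large layers to their deficient ($a_i\in\{1,2\}$) neighbours, using the $C_4$-free propagation above to guarantee that every deficient layer is adjacent to enough surplus, the bookkeeping being arranged so that each consecutive pair of layers accounts for at least five vertices on average. I expect this discharging to be the main obstacle: a single layer may legitimately have size $1$ or $2$, so no purely local ``$a_i\ge 5/2$'' statement holds, and one must amortize over windows of consecutive layers while simultaneously respecting the degree, the three-edge, and the $C_4$-free conditions, which interact and must be combined without double counting.

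Finally I would treat the two ends of the path to secure the additive constant. The extreme layers $L_0=\{v_0\}$ and $L_d\ni v_d$ behave differently from interior ones — for instance $a_0=1$ forces $a_1\ge 3$ through the three-edge cut — and a short separate analysis of the first and last few layers supplies the ``$+3$'' (equivalently the $-3$ in $2n-3$). Assembling the interior average of $5/2$ with these boundary gains yields $\sum_i a_i\ge (5d+3)/2$, that is $d\le (2n-3)/5$. As a consistency check, the minimum-degree bound (\ref{e1}) gives only $d\le 5n/8$ here, so it is essential that the argument exploits the three edges across every cut rather than merely $\delta\ge 3$.
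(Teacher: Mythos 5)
Your overall architecture matches the paper's: distance layers $V_i$ from an endpoint of maximum eccentricity, the three-edge cut across every layer boundary, $C_4$-freeness forbidding two common neighbours, and the reformulation of the bound as an average layer size of $5/2$ achieved by amortizing over windows of consecutive layers. However, there is a genuine gap at exactly the point you flag yourself as ``the main obstacle'': the discharging is never carried out, and the local facts you do establish are not the ones that make it work. Your structural analysis concentrates on layers of size $1$ (a size-$1$ layer forces neighbours of size $\geq 3$, the pattern $3,1,3,1$ is impossible), but the tight examples (Figure \ref{fig1} of the paper) consist of long runs of layers of sizes $2$ and $3$, averaging exactly $5/2$. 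The entire difficulty of the theorem is to control these runs: one must show that $(2,2)$ forces a next layer of size $\geq 3$, that $(2,3)$ forces a next layer of size $\geq 2$, that $(2,3,2)$ restricts the edges inside the middle layer and cannot occur at the very end, and --- crucially --- that a run terminating in $(2,3,2,2)$ forces the following layer to have size $\geq 4$ (the paper's Claim \ref{claim1}(d)--(g)). Without a statement of this last type, no amortization can reach the average $5/2$: a sequence such as $2,3,2,3,\ldots$ followed by a cheap ending would defeat any purely local charge-redistribution scheme, which is precisely why the paper calls its argument non-local.

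Two further points. First, the proofs of the paper's Claims \ref{claim1}(f) and (g) do not follow from the two facts you propose to rely on (three edges across each boundary, no two common neighbours); they require arguing that specific \emph{pairs} of edges cannot form a $2$-edge cut, a different and more delicate use of $\lambda\geq 3$ that your toolkit does not contain. Second, the paper's mechanism for assembling the windows is a maximality argument on the prefix average $\bigl(n_0+\cdots+n_i-\tfrac12\bigr)/(i+1)\geq \tfrac52$: taking $i$ maximum with this property, the claims force the subsequent layer sizes to follow the pattern $2,2,3,(2,3)^{*},2,2,\geq 4$, whose window average again reaches $5/2$, contradicting maximality. Some device of this kind (or an equivalent discharging rule) must be exhibited explicitly, together with the boundary analysis at $n_0$ and $n_d$ that you also only sketch. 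As written, your proposal is a correct plan whose central step is acknowledged but not performed, so it does not constitute a proof.
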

The graphs illustrated in Figure \ref{fig1} show that the factor $2/5$ in (\ref{e3})
is best possible, that is, our result is tight up to a small additive constant. 
\begin{figure}[H]
\begin{center}
% This is a LaTeX picture output by TeXCAD.
% File name: [1.pic].
% Version of TeXCAD: 4.51
% Reference / build: 27-Nov-2018 (rev. a75)
% For new versions, check: http://texcad.sf.net/
% Options on the following lines.
%\grade{\on}
%\emlines{\off}
%\epic{\off}
%\beziermacro{\on}
%\reduce{\on}
%\snapping{\on}
%\pvinsert{% Your \input, \def, etc. here}
%\quality{8.000}
%\graddiff{0.005}
%\snapasp{1}
%\zoom{9.5137}
\unitlength 0.9mm % = 2.845pt
\linethickness{0.4pt}
\ifx\plotpoint\undefined\newsavebox{\plotpoint}\fi % GNUPLOT compatibility
\begin{picture}(186,36)(0,0)
\put(45,10){\circle*{1}}
\put(65,10){\circle*{1}}
\put(85,10){\circle*{1}}
\put(105,10){\circle*{1}}
\put(125,10){\circle*{1}}
\put(145,10){\circle*{1}}
\put(45,20){\circle*{1}}
\put(65,20){\circle*{1}}
\put(85,20){\circle*{1}}
\put(105,20){\circle*{1}}
\put(125,20){\circle*{1}}
\put(145,20){\circle*{1}}
\put(35,15){\circle*{1}}
\put(25,5){\circle*{1}}
\put(165,5){\circle*{1}}
\put(25,25){\circle*{1}}
\put(165,25){\circle*{1}}
\put(55,15){\circle*{1}}
\put(75,15){\circle*{1}}
\put(95,15){\circle*{1}}
\put(115,15){\circle*{1}}
\put(135,15){\circle*{1}}
\put(45,30){\circle*{1}}
\put(65,30){\circle*{1}}
\put(85,30){\circle*{1}}
\put(105,30){\circle*{1}}
\put(125,30){\circle*{1}}
\put(145,30){\circle*{1}}
\put(35,25){\circle*{1}}
\put(25,15){\circle*{1}}
\put(165,15){\circle*{1}}
\put(25,35){\circle*{1}}
\put(165,35){\circle*{1}}
\put(55,25){\circle*{1}}
\put(75,25){\circle*{1}}
\put(95,25){\circle*{1}}
\put(115,25){\circle*{1}}
\put(135,25){\circle*{1}}
\put(45,30){\line(-2,-1){10}}
\put(65,30){\line(-2,-1){10}}
\put(85,30){\line(-2,-1){10}}
\put(105,30){\line(-2,-1){10}}
\put(125,30){\line(-2,-1){10}}
\put(145,30){\line(-2,-1){10}}
\put(35,25){\line(2,-1){10}}
\put(55,25){\line(2,-1){10}}
\put(75,25){\line(2,-1){10}}
\put(95,25){\line(2,-1){10}}
\put(115,25){\line(2,-1){10}}
\put(135,25){\line(2,-1){10}}
\put(45,20){\line(0,1){10}}
\put(65,20){\line(0,1){10}}
\put(85,20){\line(0,1){10}}
\put(105,20){\line(0,1){10}}
\put(125,20){\line(0,1){10}}
\put(145,20){\line(0,1){10}}
\put(45,30){\line(2,-1){10}}
\put(65,30){\line(2,-1){10}}
\put(85,30){\line(2,-1){10}}
\put(105,30){\line(2,-1){10}}
\put(125,30){\line(2,-1){10}}
\put(145,30){\line(2,-1){10}}
\put(55,25){\line(-2,-3){10}}
\put(75,25){\line(-2,-3){10}}
\put(95,25){\line(-2,-3){10}}
\put(115,25){\line(-2,-3){10}}
\put(135,25){\line(-2,-3){10}}
\put(155,25){\line(-2,-3){10}}
\put(45,10){\line(2,1){10}}
\put(65,10){\line(2,1){10}}
\put(85,10){\line(2,1){10}}
\put(105,10){\line(2,1){10}}
\put(125,10){\line(2,1){10}}
\put(145,10){\line(2,1){10}}
\put(55,15){\line(-2,1){10}}
\put(75,15){\line(-2,1){10}}
\put(95,15){\line(-2,1){10}}
\put(115,15){\line(-2,1){10}}
\put(135,15){\line(-2,1){10}}
\put(155,15){\line(-2,1){10}}
\put(35,15){\line(2,-1){10}}
\put(55,15){\line(2,-1){10}}
\put(75,15){\line(2,-1){10}}
\put(95,15){\line(2,-1){10}}
\put(115,15){\line(2,-1){10}}
\put(135,15){\line(2,-1){10}}
\put(25,35){\line(1,-1){10}}
\put(165,35){\line(-1,-1){10}}
\put(35,25){\line(0,1){0}}
\put(155,25){\line(0,1){0}}
\put(35,25){\line(-1,0){10}}
\put(155,25){\line(1,0){10}}
\put(25,25){\line(0,1){10}}
\put(165,25){\line(0,1){10}}
\put(25,35){\line(-2,-1){10}}
\put(165,35){\line(2,-1){10}}
\put(15,30){\line(0,-1){10}}
\put(175,30){\line(0,-1){10}}
\put(15,20){\line(2,-1){10}}
\put(175,20){\line(-2,-1){10}}
\put(25,15){\line(1,0){10}}
\put(165,15){\line(-1,0){10}}
\put(35,15){\line(-1,-1){10}}
\put(155,15){\line(1,-1){10}}
\put(25,5){\line(0,1){10}}
\put(165,5){\line(0,1){10}}
\put(25,5){\line(-2,1){10}}
\put(165,5){\line(2,1){10}}
\put(15,10){\line(2,3){10}}
\put(175,10){\line(-2,3){10}}
\put(15,10){\line(-1,1){10}}
\put(175,10){\line(1,1){10}}
\put(5,20){\line(1,0){10}}
\put(185,20){\line(-1,0){10}}
\put(15,30){\line(-1,-1){10}}
\put(175,30){\line(1,-1){10}}
\put(15,30){\circle*{1}}
\put(175,30){\circle*{1}}
\put(15,20){\circle*{1}}
\put(175,20){\circle*{1}}
\put(15,10){\circle*{1}}
\put(175,10){\circle*{1}}
\put(5,20){\circle*{1}}
\put(185,20){\circle*{1}}
\put(155,25){\circle*{1}}
\put(155,15){\circle*{1}}
\put(1,20){\makebox(0,0)[cc]{$u$}}
\end{picture}
\end{center}
\vspace{-8mm}
\caption{A family of $C_4$-free graphs $G$ with 
$(n_0,n_1,\ldots,n_d)=(1,3,4,\overbrace{2,3,2,3,\ldots,2,3}^{\mbox{$k\times (2,3)$}},2,4,3,1)$, where $n_i$ is the number of vertices at distance $i$ from $u$.
The graph $G$, depending on the integer parameter $k$, 
is $3$-edge-connected, and satisfies $d=2k+6$ and $n=5k+18$, that is, $d=\frac{2n-6}{5}$.}\label{fig1}
\end{figure}
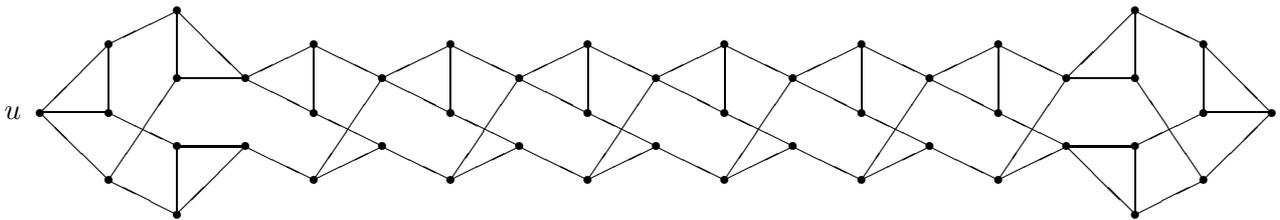
For $\delta\geq \lambda\geq 4$, the inequality (\ref{e1}) implies $d\leq \frac{5n}{13}$,
which we improve as follows.

\begin{theorem}\label{theorem1b}
If $G$ is a connected, $C_4$-free graph of order $n$, diameter $d$, 
and edge-connectivity $\lambda$ at least $4$, then
\begin{eqnarray}\label{e3b}
d \leq \frac{n-3}{3}.
\end{eqnarray}
\end{theorem}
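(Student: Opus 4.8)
The plan is to analyse the distance layers of $G$ from one endpoint of a diametral path and to show that they contain, on average, at least three vertices, so that $n\ge 3d+3$. Fix a diametral path $v_0v_1\cdots v_d$, set $L_i=\{v\in V(G):\operatorname{dist}(v_0,v)=i\}$, and write $n_i=|L_i|$ and $e_i=|E(L_i,L_{i+1})|$. Since $G$ is connected with edge-connectivity at least $4$, the edge cut separating $L_0\cup\cdots\cup L_i$ from the remaining layers has at least $4$ edges, and as no edges join non-consecutive layers this yields $e_i\ge 4$ for every $0\le i\le d-1$; in particular $\delta\ge 4$. The $C_4$-freeness enters through the single fact that any two vertices of $G$ have at most one common neighbour, which I would apply both inside the bipartite graph between two consecutive layers and, crucially, across three consecutive layers.

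The first, easy layer of estimates bounds consecutive layers from below. A $C_4$-free bipartite graph on parts of sizes $n_i$ and $n_{i+1}$ carrying at least $4$ edges forces $n_i+n_{i+1}\ge 5$, since the splits $1+3$ and $2+2$ admit at most three edges. Summing $\sum_{i=0}^{d-1}(n_i+n_{i+1})\ge 5d$ already gives $n\ge (5d+2)/2$, but not the desired bound, exactly as the analogous pair estimate falls short for Theorem~\ref{theorem1}.

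The heart of the argument is a set of structural lemmas describing the \emph{small} layers, those with $n_i\le 2$. If $n_i=1$, then the unique vertex of $L_i$ receives all the $\ge 4$ cut edges on each side, forcing $n_{i-1},n_{i+1}\ge 4$. If $n_i=2$, say $L_i=\{a_1,a_2\}$, then the $\ge 4$ edges on each side together with $|N(a_1)\cap N(a_2)|\le 1$ give $n_{i-1}+n_{i+1}\ge e_{i-1}+e_i-1\ge 7$. The decisive refinement is that a layer of size $3$ cannot sit between two layers of size $2$: in such a $2,3,2$ pattern both outer size-two pairs are forced to cover all three middle vertices, and tracking the common neighbours across the three layers then yields a pair of middle vertices with two common neighbours, that is, a $C_4$. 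Establishing this three-layer incompatibility (and its relatives, such as forbidding two consecutive small layers) is the main obstacle, since it is precisely what excludes the otherwise admissible dense patterns such as $2,3,2,3,\dots$ that would violate the theorem.

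Finally I would assemble these local inequalities into $\sum_{i=0}^{d}(n_i-3)\ge 0$. Because low windows such as $2,4,2,3$ are always compensated by a forced enlargement one step later, no uniform bound on a fixed number of consecutive layers suffices, so the summation must be amortized: I would run a discharging scheme in which each small layer draws its deficit ($2$ for a size-one layer, $1$ for a size-two layer) from the surplus of its neighbours, using the lemmas above to guarantee that the demand never exceeds the available surplus. The two ends supply the additive constant: the Erd\H{o}s--Pach--Pollack--Tuza estimate of at least $\delta^2-2\lfloor\delta/2\rfloor+1\ge 13$ vertices within distance $2$ of a vertex gives $n_0+n_1+n_2\ge 13$ and $n_{d-2}+n_{d-1}+n_d\ge 13$, i.e.\ a surplus of $4$ at each end, which covers any bounded deficit that the interior discharging leaves and yields $n\ge 3d+3$. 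I expect the bookkeeping around size-$4$ layers adjacent to two small layers to be the most delicate point of this last step.
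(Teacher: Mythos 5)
Your overall strategy is the same as the paper's: analyse the BFS layers, prove structural lower bounds involving the small layers, and sum. Your easy lemmas are correct: $n_i=1$ forces $n_{i-1},n_{i+1}\geq 4$; for $n_i=2$ the count $n_{i-1}+n_{i+1}\geq e_{i-1}+e_i-1\geq 7$ is a clean and correct argument (the unique common neighbour of the two vertices of $L_i$ can lie on only one side); and the exclusion of the window $(2,3,2)$ can indeed be proved along the lines you sketch. Your treatment of the two ends via the Erd\H{o}s--Pach--Pollack--Tuza bound $n_0+n_1+n_2\geq 13$ and $n_{d-2}+n_{d-1}+n_d\geq 13$ is also sound, and in fact it is what secures the additive constant $-3$.

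The gap is in the amortization step, and it is not a matter of bookkeeping. Consider a layer sequence containing long stretches of the periodic pattern $\ldots,3,3,2,4,2,3,3,2,4,2,3,3,\ldots$ Every constraint you state is satisfied: each size-$2$ layer has neighbours summing to $7$, there is no $(2,3,2)$ window, no two consecutive small layers, and every two consecutive layers sum to at least $5$. Yet the average layer size along the stretch is $14/5<3$, so $\sum_i(n_i-3)$ decreases linearly in the length of the stretch, and no discharging scheme can recover this from the bounded surplus of $8$ supplied by the two ends. Your assertion that windows such as $2,4,2,3$ ``are always compensated by a forced enlargement one step later'' is not a consequence of your lemmas; nothing in them forces any enlargement. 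What is actually true --- and what the paper proves as the hardest case of its key claim $n_{i-1}+n_i+n_{i+1}\geq 9$ for all interior $i$ --- is that the window $(2,4,2)$ is \emph{impossible}: assuming it, the edge-cut condition and $C_4$-freeness pin down the adjacencies between the size-$2$ layers and the size-$4$ layer, and one then derives either a $C_4$ or a vertex of degree at most $3$. The related windows $(1,4,2)$ and $(1,4,1)$, which equally break your scheme via the pattern $(4,1,4,2,3)$ repeated, must also be excluded; these are cheaper, since $n_{i-1}=1$ forces every vertex of $L_i$ to be adjacent to that single vertex, so vertices of $L_{i+1}$ have at most one neighbour in $L_i$ and $n_{i+1}\geq e_i\geq 4$. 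Until you supply the $(2,4,2)$ exclusion, which is the real crux of the theorem and the longest part of the paper's case analysis, your proof does not close.
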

We believe that the factor $1/3$ in (\ref{e3b}) can be improved to $2/7$,
which would be best possible in view of the graphs illustrated in Figure \ref{fig1b}.

\begin{figure}[H]
\begin{center}
% This is a LaTeX picture output by TeXCAD.
% File name: [5.pic].
% Version of TeXCAD: 4.51
% Reference / build: 27-Nov-2018 (rev. a75)
% For new versions, check: http://texcad.sf.net/
% Options on the following lines.
%\grade{\on}
%\emlines{\off}
%\epic{\off}
%\beziermacro{\on}
%\reduce{\on}
%\snapping{\on}
%\pvinsert{% Your \input, \def, etc. here}
%\quality{8.000}
%\graddiff{0.005}
%\snapasp{1}
%\zoom{11.3136}
\unitlength 0.5mm % = 2.845pt
\linethickness{0.4pt}
\ifx\plotpoint\undefined\newsavebox{\plotpoint}\fi % GNUPLOT compatibility
\begin{picture}(290,118)(0,0)
\put(46,60){\circle*{2}}
\put(86,60){\circle*{2}}
\put(126,60){\circle*{2}}
\put(166,60){\circle*{2}}
\put(206,60){\circle*{2}}
\put(246,60){\circle*{2}}
\put(46,100){\circle*{2}}
\put(86,100){\circle*{2}}
\put(126,100){\circle*{2}}
\put(166,100){\circle*{2}}
\put(206,100){\circle*{2}}
\put(246,100){\circle*{2}}
\put(46,20){\circle*{2}}
\put(86,20){\circle*{2}}
\put(126,20){\circle*{2}}
\put(166,20){\circle*{2}}
\put(206,20){\circle*{2}}
\put(246,20){\circle*{2}}
\put(66,20){\circle*{2}}
\put(106,20){\circle*{2}}
\put(146,20){\circle*{2}}
\put(186,20){\circle*{2}}
\put(226,20){\circle*{2}}
\put(66,50){\circle*{2}}
\put(106,50){\circle*{2}}
\put(146,50){\circle*{2}}
\put(186,50){\circle*{2}}
\put(226,50){\circle*{2}}
\put(66,70){\circle*{2}}
\put(106,70){\circle*{2}}
\put(146,70){\circle*{2}}
\put(186,70){\circle*{2}}
\put(226,70){\circle*{2}}
\put(66,100){\circle*{2}}
\put(106,100){\circle*{2}}
\put(146,100){\circle*{2}}
\put(186,100){\circle*{2}}
\put(226,100){\circle*{2}}
\put(66,20){\line(0,1){30}}
\put(106,20){\line(0,1){30}}
\put(146,20){\line(0,1){30}}
\put(186,20){\line(0,1){30}}
\put(226,20){\line(0,1){30}}
\put(66,70){\line(0,1){30}}
\put(106,70){\line(0,1){30}}
\put(146,70){\line(0,1){30}}
\put(186,70){\line(0,1){30}}
\put(226,70){\line(0,1){30}}
\put(46,60){\line(0,-1){40}}
\put(86,60){\line(0,-1){40}}
\put(126,60){\line(0,-1){40}}
\put(166,60){\line(0,-1){40}}
\put(206,60){\line(0,-1){40}}
\put(246,60){\line(0,-1){40}}
\put(66,100){\line(-1,0){20}}
\put(106,100){\line(-1,0){20}}
\put(146,100){\line(-1,0){20}}
\put(186,100){\line(-1,0){20}}
\put(226,100){\line(-1,0){20}}
\put(46,100){\line(2,-3){20}}
\put(86,100){\line(2,-3){20}}
\put(126,100){\line(2,-3){20}}
\put(166,100){\line(2,-3){20}}
\put(206,100){\line(2,-3){20}}
\put(66,70){\line(-2,-1){20}}
\put(106,70){\line(-2,-1){20}}
\put(146,70){\line(-2,-1){20}}
\put(186,70){\line(-2,-1){20}}
\put(226,70){\line(-2,-1){20}}
\put(66,50){\line(-2,-3){20}}
\put(106,50){\line(-2,-3){20}}
\put(146,50){\line(-2,-3){20}}
\put(186,50){\line(-2,-3){20}}
\put(226,50){\line(-2,-3){20}}
\put(46,20){\line(1,0){20}}
\put(86,20){\line(1,0){20}}
\put(126,20){\line(1,0){20}}
\put(166,20){\line(1,0){20}}
\put(206,20){\line(1,0){20}}
\put(66,100){\line(1,0){20}}
\put(106,100){\line(1,0){20}}
\put(146,100){\line(1,0){20}}
\put(186,100){\line(1,0){20}}
\put(226,100){\line(1,0){20}}
\put(86,100){\line(-2,-5){20}}
\put(126,100){\line(-2,-5){20}}
\put(166,100){\line(-2,-5){20}}
\put(206,100){\line(-2,-5){20}}
\put(246,100){\line(-2,-5){20}}
\put(86,20){\line(-1,0){20}}
\put(126,20){\line(-1,0){20}}
\put(166,20){\line(-1,0){20}}
\put(206,20){\line(-1,0){20}}
\put(246,20){\line(-1,0){20}}
\put(66,20){\line(1,2){20}}
\put(106,20){\line(1,2){20}}
\put(146,20){\line(1,2){20}}
\put(186,20){\line(1,2){20}}
\put(226,20){\line(1,2){20}}
\put(86,60){\line(-2,1){20}}
\put(126,60){\line(-2,1){20}}
\put(166,60){\line(-2,1){20}}
\put(206,60){\line(-2,1){20}}
\put(246,60){\line(-2,1){20}}
\qbezier(66,100)(76,75)(66,50)
\qbezier(106,100)(116,75)(106,50)
\qbezier(146,100)(156,75)(146,50)
\qbezier(186,100)(196,75)(186,50)
\qbezier(226,100)(236,75)(226,50)
\put(26,10){\circle*{2}}
\put(266,10){\circle*{2}}
\put(26,50){\circle*{2}}
\put(266,50){\circle*{2}}
\put(26,90){\circle*{2}}
\put(266,90){\circle*{2}}
\put(26,30){\circle*{2}}
\put(266,30){\circle*{2}}
\put(26,70){\circle*{2}}
\put(266,70){\circle*{2}}
\put(26,110){\circle*{2}}
\put(266,110){\circle*{2}}
\put(26,10){\line(2,1){20}}
\put(266,10){\line(-2,1){20}}
\put(46,20){\line(-2,1){20}}
\put(246,20){\line(2,1){20}}
\put(26,50){\line(2,1){20}}
\put(266,50){\line(-2,1){20}}
\put(46,60){\line(-2,1){20}}
\put(246,60){\line(2,1){20}}
\put(26,90){\line(2,1){20}}
\put(266,90){\line(-2,1){20}}
\put(46,100){\line(-2,1){20}}
\put(246,100){\line(2,1){20}}
\put(26,60){\oval(6,108)[]}
\put(266,60){\oval(6,108)[]}
\put(26,60){\makebox(0,0)[cc]{$I$}}
\put(266,60){\makebox(0,0)[]{$I$}}
\put(21,60){\oval(30,116)[]}
\put(271,60){\oval(30,116)[]}
\put(15,60){\makebox(0,0)[cc]{$G_0$}}
\put(277,60){\makebox(0,0)[]{$G_0$}}
\end{picture}
\end{center}
\caption{A family of $C_4$-free $4$-edge-connected graphs $G$ of order $n$ and diameter $d$ with $d=\frac{2}{7}n+O(1)$.
The graph $G$ contains two disjoint copies of the graph $G_0$ 
explained after Theorem \ref{lemma2}.}\label{fig1b}
\end{figure}
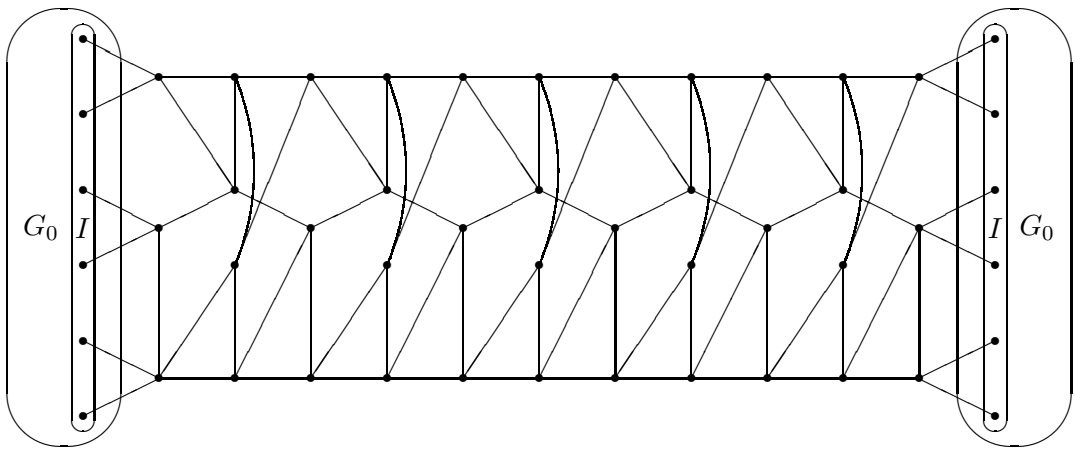
The proofs of (\ref{e1}), (\ref{e2}), and (\ref{e3b}) all rely on rather local arguments
counting vertices within some bounded distance. While this leads to essentially best possible results in some settings, for the proof of (\ref{e3}), 
involving the best possible factor $2/5$, a non-local argument was required.
We believe that also the proof of a best possible version of (\ref{e3b}) 
would require a non-local argument.
The two presented improvements of the consequences of (\ref{e1}) 
for small values of $\lambda$ both require a detailed case analysis,
that is, their proofs do not seem to generalize to larger values of $\lambda$.

The graphs constructed by Erd\H{o}s et al.~\cite{erpapotu} for (\ref{e1b}) 
contain numerous bridges, that is, their edge-connectivity is small.
Analyzing the edge-connectivity of the building blocks $B(q)$ 
for that construction, cf.~Lemma \ref{lemma1} below, 
allows to show the following.

\begin{theorem}\label{lemma2}
For an odd prime power $q$ at least $7$, and every positive integer $k$, 
there is a connected $C_4$-free graph of 
order $n=(q^2+q-1)k+1$, 
diameter $d=4k$, 
and edge-connectivity $\lambda$ at least $q-6$, in particular,
$$d\geq \frac{4(n-1)}{\lambda^2-11\lambda+29}.$$
\end{theorem}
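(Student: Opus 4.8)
The plan is to realize $G$ as a chain of $k$ copies of a small modification of the Brown graph $B(q)$, linked at dense junctions, together with one apex vertex $u$ accounting for the additive $+1$ in the order; the four parameters are then verified one at a time. I would first record the facts about the building block on which everything rests: $B(q)$ is $C_4$-free, has order $q^2+q+1$, diameter $2$, is almost regular of degree about $q$, and---this is precisely the content of Lemma~\ref{lemma1}---has edge-connectivity bounded below by essentially $q$. Inside each copy I would single out two interface sets arising from the point--line incidence of the underlying projective plane, an \emph{entry} set and an \emph{exit} set, each of size about $q$, with the entry set meeting the preceding block and the exit set meeting the following one.

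The construction takes copies $B^{(1)},\dots,B^{(k)}$ and, for each $i<k$, joins the exit interface of $B^{(i)}$ to the entry interface of $B^{(i+1)}$ by about $q$ linking edges forming a matching between the two interfaces, while $u$ is joined to the entry interface of $B^{(1)}$. Deleting two redundant vertices from each copy trims its contribution to exactly $q^2+q-1$ vertices, which gives $n=(q^2+q-1)k+1$; this is the one place the count must be tracked with care. For the diameter I would root a breadth-first search at $u$ and argue, in the spirit of the layer vector $(n_0,n_1,\dots,n_d)$ of Figure~\ref{fig1}, that each block together with its incoming junction occupies four consecutive distance layers, so that the eccentricity of $u$ is $4k$; a short separate check that no pair of vertices is farther apart than $u$ is from the deepest layer then yields $d=4k$.

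$C_4$-freeness inside a single copy is inherited from $B(q)$, so the only four-cycles to rule out are those using edges of two different blocks or of $u$; since consecutive blocks are joined only through a matching between incidence-defined interfaces, no two vertices acquire two common neighbours across a junction, which I would verify directly from the defining bilinear form. The heart of the argument, and the step I expect to be hardest, is the bound $\lambda(G)\ge q-6$, because $C_4$-freeness forces the junctions to be thin and structured while large edge-connectivity wants them dense. I would analyse an arbitrary minimum edge cut $C$ by the blocks it meets: if the two sides split some single copy nontrivially, then $C$ contains an edge cut of that copy and hence at least $\lambda(B(q))$ edges, which Lemma~\ref{lemma1} bounds below by more than $q-6$; otherwise $C$ must sever the chain at some junction, and since each junction carries about $q$ linking edges, the cut again has at least $q-6$ edges, the constant absorbing the few absolute (low-degree) vertices and the vertices deleted to fix the count. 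Combining the cases gives $\lambda\ge q-6$, and the displayed inequality then follows routinely: substituting $k=d/4$ into $n=(q^2+q-1)k+1$ gives $d=\frac{4(n-1)}{q^2+q-1}$, and expressing $q^2+q-1$ through $\lambda$ by means of the bound $\lambda\ge q-6$ turns the denominator into the stated quadratic.
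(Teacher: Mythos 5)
Your construction differs from the paper's in a way that breaks the diameter claim, which is the heart of the theorem. The paper's building block $H$ is not ``$B(q)$ minus two redundant vertices'': it is $B(q)$ minus the quadric vertex $c$ \emph{and minus the perfect matching $M$} between $A=N_{B(q)}(a)\setminus N_{B(q)}(b)$ and $B=N_{B(q)}(b)\setminus N_{B(q)}(a)$. Deleting $c$ destroys the unique $a$--$b$ path of length two, and deleting the edges of $M$ destroys all $a$--$b$ paths of length three; only after both deletions is $d_H(a,b)=4$, which is exactly what makes each block contribute $4$ to the diameter. In your proposal the matching $M$ and the many common neighbours $c_{ij}$ of pairs $a_i\in A$, $b_j\in B$ all survive, since you delete only two vertices and never delete edges. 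Consequently, whichever two vertices you remove, the entry and exit interfaces of each of your blocks remain at distance at most $2$ from one another (often $1$, via $M$), so your chain has diameter roughly $2k$ to $3k$ rather than $4k$. Your graph has the right order but not the claimed diameter, and the constant $4$ in $d\geq 4(n-1)/(\lambda^2+O(\lambda))$ --- the entire point of the theorem --- is lost; no check of the remaining properties can repair this.

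Two further discrepancies are consequences of the first. You invoke Lemma \ref{lemma1} as a connectivity bound for (a vertex-deleted) $B(q)$, but that lemma is a statement about $H$: the six lost units of connectivity come precisely from discarding, among the $q-1$ internally disjoint paths in $B(q)$, the at most one path through $c$ and the at most four paths using an edge of $M$. Second, the paper glues consecutive copies by \emph{identifying} the vertex $b$ of $H_i$ with the vertex $a$ of $H_{i+1}$, not by adding a linking matching and an apex vertex; this gives the order $(q^2+q)k-(k-1)=(q^2+q-1)k+1$ directly, and, since the identified vertices are cut vertices, every cycle and every minimal edge cut lies inside a single copy, so $C_4$-freeness and $\lambda\geq q-6$ follow from Lemma \ref{lemma1} with no junction analysis at all. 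Your junction arguments (no $C_4$ across a linking matching, cuts either split a copy or sever a junction) are plausible sketches, but they address difficulties that the identification trick avoids entirely. Finally, your last step is not as routine as you claim: $\lambda\geq q-6$ gives $q\leq\lambda+6$ and hence $q^2+q-1\leq\lambda^2+13\lambda+41$, whereas the displayed denominator $\lambda^2-11\lambda+29$ corresponds to the substitution $q=\lambda-6$, so this step needs more care than either your proposal or the paper's one-line proof devotes to it.
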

Note that Theorem \ref{lemma2} implies the existence of 
a specific $C_4$-free, $4$-edge-connected graph $G_0$ of bounded order
for which $G_0^2$ contains an independent set $I$ of order $6$,
which is used in Figure \ref{fig1b}.

All proofs are given in the following section.

In a final concluding section, we discuss directions for further research.

\section{Proofs}

We begin with the proof of Theorem \ref{theorem1};
the non-local argument mentioned above 
relies on Claim \ref{claim1}(c)-(g) as well as (\ref{e4}).

\begin{proof}[Proof of Theorem \ref{theorem1}]
Let $G$ be a connected $C_4$-free graph of 
order $n$, 
diameter $d$, 
and edge-connectivity $\lambda$ at least $3$.
Let $u$ be a vertex in $G$ of maximum eccentricity.
For $i\in \{ 0,1,\ldots,d\}$,
let $V_i$ be the set of vertices of $G$ at distance $i$ from $u$.
By the choice of $u$, the number $n_i$ of vertices in $V_i$ is positive for every $i\in \{ 0,1,\ldots,d\}$.

\begin{claim}\
\label{claim1}
\begin{enumerate}[(a)]
\item If $n_i=1$ for some $i\leq d-1$, 
then $n_{i+1}\geq 3$.
\item If $n_i=2$ for some $i\leq d-1$, 
then $n_{i+1}\geq 2$.
\item If $(n_i,n_{i+1})=(1,3)$ for some $i\leq d-1$, 
then $i\leq d-2$ and $n_{i+2}\geq 4$.
\item If $(n_i,n_{i+1})=(2,2)$ for some $i\leq d-1$, 
then $i\leq d-2$ and $n_{i+2}\geq 3$.
\item If $(n_i,n_{i+1})=(2,3)$ for some $i\leq d-1$, 
then $i\leq d-2$ and $n_{i+2}\geq 2$.
\item If $(n_i,n_{i+1},n_{i+2})=(2,3,2)$ for some $i\leq d-2$, 
then $V_{i+1}$ contains exactly one or two edges,
and $i\leq d-3$.
\item If $(n_i,n_{i+1},n_{i+2},n_{i+3})=(2,3,2,2)$ for some $i\leq d-3$, 
then $i\leq d-4$ and $n_{i+4}\geq 4$.
\end{enumerate}
\end{claim}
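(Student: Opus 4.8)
The plan is to analyze the layers $V_0,\dots,V_d$ using only two structural facts about $G$. First, since the edge-connectivity is at least $3$, the minimum degree is at least $3$, and for every $j\le d-1$ the edges between $V_j$ and $V_{j+1}$ form an edge cut of $G$, hence number at least $3$. Second, since $G$ is $C_4$-free, any two vertices have at most one common neighbour, so the bipartite graph between two consecutive layers contains no $K_{2,2}$. Combining these, I would obtain matching bounds on the number of edges between small layers: between a layer of size $2$ and one of size $2$ there are at most $3$ edges, and between a layer of size $2$ (or $3$) and a layer of size $3$ (resp.\ $2$) there are at most $4$ edges. The whole claim is then an exhaustive but elementary discussion of how these lower and upper edge bounds interact. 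Parts~(a) and~(b) are pure cut counting: if $V_i=\{v\}$ then all (at least three) cut edges leave the single vertex $v$ and reach distinct vertices of $V_{i+1}$, so $n_{i+1}\ge 3$; and if $n_{i+1}=1$ the cut between $V_i$ and $V_{i+1}$ has at most $|V_i|\le 2$ edges, contradicting edge-connectivity at least $3$.

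For parts~(c)--(e) the strategy is uniform: use the two facts to pin the relevant cut to its \emph{exact} size, read off which vertices are thereby forced to share a common neighbour, and then either exhibit a second common neighbour (a $C_4$, contradiction) or observe a degree deficiency that can only be repaired in the next layer. In~(c), the unique vertex of $V_i$ is adjacent to all three vertices of $V_{i+1}$, so $C_4$-freeness permits at most one edge inside $V_{i+1}$; every vertex of $V_{i+1}$ then still needs a neighbour in $V_{i+2}$ (forcing $i\le d-2$), and since no vertex of $V_{i+2}$ may be adjacent to two vertices of $V_{i+1}$, their out-neighbourhoods are disjoint and sum to at least $4$. In~(d), if $n_{i+2}=2$ then both cuts $V_i$--$V_{i+1}$ and $V_{i+1}$--$V_{i+2}$ have exactly three edges, each forming a path that makes the two vertices of $V_{i+1}$ share a common neighbour, one in $V_i$ and one in $V_{i+2}$; two common neighbours give a $C_4$. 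In~(e), if $n_{i+2}=1$ then that single vertex is adjacent to all of $V_{i+1}$, so no two vertices of $V_{i+1}$ may share a vertex of $V_i$, forcing at most $2$ edges across the $V_i$--$V_{i+1}$ cut, a contradiction. The bounds $i\le d-2$ throughout~(c)--(e) follow from the same mechanism: assume the layer in question is the last one and derive a $C_4$ or a vertex of degree at most $2$.

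Parts~(f) and~(g) rest on the degree sum of $V_{i+1}$. Writing $a,b,c$ for the numbers of edges across the $V_i$--$V_{i+1}$ cut, inside $V_{i+1}$, and across the $V_{i+1}$--$V_{i+2}$ cut, the inequality $9\le a+2b+c$ together with $a,c\le 4$ forces $b\ge 1$, while a triangle inside $V_{i+1}$ would make two of its vertices share both a third layer-vertex and a vertex of $V_i$, producing a $C_4$; hence $b\in\{1,2\}$. The remaining statements ($i\le d-3$ in~(f), and $i\le d-4$ together with $n_{i+4}\ge 4$ in~(g)) are then settled by assuming the relevant layer to be last (or too small) and checking, for each admissible sharing pattern between consecutive layers, that some two vertices acquire a second common neighbour or that some vertex is left with degree at most $2$.

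I expect this last case analysis to be the main obstacle. Each individual case is a short $C_4$-or-degree argument of exactly the type already used in~(c)--(e), but once two short layers of sizes $2$ and $3$ sit adjacent, the number of feasible adjacency and sharing patterns grows, and the delicate point is to keep the bookkeeping of common neighbours complete, so that no induced $C_4$ is overlooked and no pattern is missed. My plan is therefore to fix, in each case, the exact edge counts given by $a,b,c\in\{3,4\}\times\{1,2\}\times\{3,4\}$, list the possible locations of the forced shared neighbours, and verify case by case that the configuration either already contains two vertices with two common neighbours or leaves a vertex of $V_{i+2}$ (resp.\ $V_{i+3}$) with too few neighbours unless the next layer exists and is large enough.
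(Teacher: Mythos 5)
Your treatment of parts (a)--(f) is sound and essentially the paper's: the cut between consecutive layers has at least $3$ edges, the $K_{2,2}$-free bounds on cross edges (at most $3$ between two layers of size $2$, at most $4$ between layers of sizes $2$ and $3$) hold, and the pigeonhole/degree arguments you describe do prove (a)--(e) and the bounds $i\leq d-2$; your degree-sum inequality $a+2b+c\geq 9$ with $a,c\leq 4$ is a clean way to force $b\geq 1$ in (f), and excluding a triangle in $V_{i+1}$ works exactly as you say, so $b\in\{1,2\}$ follows.

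The gap is in (g), and it is precisely the point the paper flags as requiring a non-local argument. Your plan commits to eliminating every admissible adjacency pattern by exhibiting either a second common neighbour (a $C_4$) or a vertex of degree at most $2$. That toolkit is provably insufficient: in the analysis of (g) there arise patterns that contain no $C_4$ and in which every vertex can have degree at least $3$, yet which are impossible in a $3$-edge-connected graph because two specific edges form a $2$-edge cut. Concretely, with $V_{i+1}=\{a_1,a_2,a_3\}$, $V_{i+2}=\{b_1,b_2\}$, $V_{i+3}=\{c_1,c_2\}$, consider the pattern with edges $a_1a_2$, $a_2a_3$ inside $V_{i+1}$, with $b_1$ adjacent to $a_1$, $a_2$, $c_1$, and with $b_2$ adjacent to $a_3$, $c_1$, $c_2$; $C_4$-freeness forces these to be \emph{all} edges incident with $b_1$ and $b_2$. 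One checks that no two vertices here have two common neighbours, and every vertex other than $b_1,b_2$ can reach degree $3$ using $V_i$, $V_{i+4}$, or edges inside its own layer, so neither of your two contradictions ever materializes. Nevertheless the configuration is impossible: the two edges $a_3b_2$ and $b_1c_1$ separate $V_0\cup\cdots\cup V_{i+1}\cup\{b_1\}$ from the rest of $G$, contradicting $\lambda\geq 3$. The paper's proof of (g) needs such $2$-edge-cut contradictions three times (to show $V_{i+1}$ has exactly one edge, to show $V_{i+2}$ is complete, and for the final contradiction when $n_{i+4}=3$); note also that $i\leq d-4$ and $n_{i+4}\geq 3$ follow immediately from (d), so excluding $n_{i+4}=3$ is the entire content of (g). Without adding the $2$-edge-cut idea (or some other genuinely non-local use of $\lambda\geq 3$) to your toolkit, your case analysis in (g) cannot be completed.
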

\begin{proof}[Proof of Claim \ref{claim1}]
(a) and (b) follow because $\lambda\geq 3$ implies $n_in_{i+1}\geq 3$.

\medskip

\noindent (c) Since $\lambda\geq 3$, 
the unique vertex in $V_i$ is adjacent to all three vertices in $V_{i+1}$.
If $i=d-1$, then $V_{i+1}$ must be complete,
which implies the contradiction that $G$ contains some $C_4$.
Hence, we have $i\leq d-2$.
Suppose, for a contradiction, that 
$(n_i,n_{i+1},n_{i+2})=(1,3,p)$ for some $p\leq 3$.
If $p\leq 2$, then some vertex in $V_{i+2}$ has two neighbors in $V_{i+1}$,
which implies the contradiction that $G$ contains some $C_4$.
It follows that $p=3$, and 
that every vertex in $V_{i+2}$ has exactly one neighbor in $V_{i+1}$,
which implies that the edges between $V_{i+1}$ and $V_{i+2}$ form a matching.
Since every vertex in $V_{i+1}$ has degree at least $3$,
the set $V_{i+1}$ contains at least two edges, 
which implies the contradiction that $G$ contains some $C_4$.
This completes the proof of (c).

\medskip

\noindent (d) If $i=d-1$, then $\lambda\geq 3$ implies 
that there are all four possible edges between $V_i$ and $V_{i+1}$,
which implies the contradiction that $G$ contains some $C_4$.
Hence, we have $i\leq d-2$.
Suppose, for a contradiction, that 
$(n_i,n_{i+1},n_{i+2})=(2,2,p)$ for some $p\leq 2$.
Since $\lambda\geq 3$, 
both $V_i$ as well as $V_{i+2}$ contain a vertex 
that is adjacent to both vertices in $V_{i+1}$,
which implies the contradiction that $G$ contains some $C_4$.
This completes the proof of (d).

\medskip

\noindent (e) If $i=d-1$, then the $C_4$-freeness implies 
that two vertices in $V_{i+1}$ have only one neighbor in $V_i$.
Since $\lambda\geq 3$, this implies that $V_{i+1}$ is complete.
Since two vertices in $V_{i+1}$ have a common neighbor in $V_i$,
we obtain the contradiction that $G$ contains some $C_4$.
Hence, we have $i\leq d-2$.
Suppose, for a contradiction, that 
$(n_i,n_{i+1},n_{i+2})=(2,3,1)$.
Since $\lambda\geq 3$, 
the unique vertex in $V_{i+2}$ is adjacent to all three vertices in $V_{i+1}$,
and some vertex in $V_i$ has two neighbors in $V_{i+1}$,
which implies the contradiction that $G$ contains some $C_4$.
This completes the proof of (e).

\medskip

\noindent (f) First, suppose, for a contradiction, 
that $V_{i+1}$ is independent.
Since $\lambda\geq 3$, 
the set $V_{i+1}$ contains
two vertices that are adjacent to both vertices in $V_i$
or $V_{i+1}$ contains
two vertices that are adjacent to both vertices in $V_{i+2}$.
In both cases $G$ contains some $C_4$,
which implies that $V_{i+1}$ is not independent.
Now, suppose, for a contradiction, that $V_{i+1}$ is complete,
then some vertex in $V_{i+2}$ has two neighbors in $V_{i+1}$,
which implies the contradiction that $G$ contains some $C_4$.
Hence, the set $V_{i+1}$ contains exactly one or two edges.
If $i=d-2$, then $\lambda\geq 3$ and the $C_4$-freeness imply 
that both vertices in $V_{i+2}$ are adjacent
and have two neighbors in $V_{i+1}$.
Together with a suitable edge within $V_{i+1}$,
we obtain the contradiction that $G$ contains some $C_4$.
Hence, we have $i\leq d-3$.
This completes the proof of (f).

\medskip

\noindent (g) By (d), 
we obtain $i\leq d-4$ and $n_{i+4}\geq 3$.
Suppose, for a contradiction, that $n_{i+4}=3$.
Let 
$V_{i+1}=\{ a_1,a_2,a_3\}$, 
$V_{i+2}=\{ b_1,b_2\}$,
$V_{i+3}=\{ c_1,c_2\}$, and
$V_{i+4}=\{ d_1,d_2,d_3\}$.

First, we show that 
$V_{i+1}$ contains exactly one edge.
Therefore, by (f), we suppose, for a contradiction, 
that $V_{i+1}$ contains exactly two edges;
say $a_1a_2,a_2a_3\in E(G)$.
Since $\lambda\geq 3$ and $G$ contains no $C_4$,
we may assume, by symmetry,
that $c_1$ is adjacent to $b_1$ and $b_2$,
that $b_1$ is adjacent to $a_1$ and $a_2$,
that $b_2$ is adjacent to $a_3$, and 
that $c_2$ has exactly one neighbor in $V_{i+2}$.
If $c_2$ is adjacent to $b_1$,
then, since $G$ contains no $C_4$,
the vertex $b_2$ has degree $2$, 
which contradicts $\lambda\geq 3$.
Conversely, 
if $c_2$ is adjacent to $b_2$,
then, since $G$ contains no $C_4$,
the two edges $a_3b_2$ and $b_1c_1$ form a $2$-edge cut,
which contradicts $\lambda\geq 3$.
Hence, the set $V_{i+1}$ contains exactly one edge;
say $a_1a_2\in E(G)$.

Next, we show that $V_{i+2}$ is complete.
Therefore, suppose, for a contradiction,
that $b_1$ and $b_2$ are not adjacent.
Since $\lambda\geq 3$ and $G$ contains no $C_4$,
we may assume, by symmetry,
that $c_1$ is adjacent to $b_1$ and $b_2$,
that $a_3$ is adjacent to $b_2$, and
that $b_1$ is adjacent to $a_1$.
Since the two edges $a_3b_2$ and $b_1c_1$ form no $2$-edge cut,
it follows
that $b_2$ is adjacent to $a_2$, and
that $b_1$ is adjacent to $c_2$.
Now, 
the two edges $a_1b_1$ and $b_2c_1$ form a $2$-edge cut,
which contradicts $\lambda\geq 3$.
Hence, the set $V_{i+2}$ is complete.

Since $\lambda\geq 3$ and $G$ contains no $C_4$,
we may assume, by symmetry,
that $c_1$ is adjacent to $b_1$ and $b_2$,
that $c_2$ is adjacent to $b_2$,
that $c_2$ is adjacent to $d_2$ and $d_3$, and
that $c_1$ is adjacent to $d_1$.
Now, 
the two edges $b_2c_2$ and $c_1d_1$ form a $2$-edge cut,
which contradicts $\lambda\geq 3$.
This completes the proof of (g).
\end{proof}
Since $n_0=1$, 
Claim \ref{claim1}(a,c) implies that 
either $n_1\geq 4$
or $n_1=3$ and $n_2\geq 4$. 
Furthermore, if $n_1=4$, then $n_2\geq 3$.
In all cases we obtain 
$\frac{n_0+n_1-\frac{1}{2}}{1+1}\geq \frac{5}{2}$ 
or $\frac{n_0+n_1+n_2-\frac{1}{2}}{2+1}\geq \frac{5}{2}$.
Let $i\in \{ 0,1,\ldots,d\}$ be maximum such that 
\begin{eqnarray}\label{e4}
\frac{n_0+n_1+\cdots+n_i-\frac{1}{2}}{i+1}\geq \frac{5}{2}.
\end{eqnarray}
The above observations imply that $i$ is well-defined and that $i\geq 1$.

If $i=d$, then $\frac{n-\frac{1}{2}}{d+1}\geq \frac{5}{2}$, which implies (\ref{e3}).
Similarly, 
if $i=d-1$, then $\frac{n-1-\frac{1}{2}}{d}\geq \frac{n-n_d-\frac{1}{2}}{d}\geq \frac{5}{2}$, 
which implies (\ref{e3}).
Hence, we may assume that $i\leq d-2$.
The choice of $i$ implies $n_{i+1}\in \{ 1,2\}$.
If $n_{i+1}=1$, then, by Claim \ref{claim1}(a,c),
we obtain $\frac{n_{i+1}+n_{i+2}}{2}\geq \frac{5}{2}$
or $i\leq d-3$ and $\frac{n_{i+1}+n_{i+2}+n_{i+3}}{3}\geq \frac{5}{2}$,
which implies the contradiction 
that $i+2$ or $i+3$ satisfies (\ref{e4}) (replacing $i$ with the larger value).
Hence, we obtain $n_{i+1}=2$.
By the choice of $i$, Claim \ref{claim1}(b) implies $n_{i+2}=2$.
By Claim \ref{claim1}(d), we obtain $i\leq d-3$ and $n_{i+3}\geq 3$.
By the choice of $i$, we have $n_{i+3}=3$.
Let the positive integer $k$ be maximum such that $i+2k+1\leq d$ and
$$(n_{i+1},n_{i+2},n_{i+3},\ldots,n_{i+2k},n_{i+2k+1})
=(2,\underbrace{2,3,\ldots,2,3}_{k \times (2,3)}).$$
By Claim \ref{claim1}(d) and the choice of $i$,
we obtain $i+2k+2\leq d$ and $n_{i+2k+2}=2$.
By Claim \ref{claim1}(b)(f) and the choices of $i$ and $k$, 
we obtain $i+2k+3\leq d$ and $n_{i+2k+3}=2$.
By Claim \ref{claim1}(g), 
we obtain $i+2k+4\leq d$ and $n_{i+2k+4}\geq 4$.
Now, we have
$\frac{n_{i+1}+\cdots+n_{i+2k+4}}{2k+4}\geq \frac{5}{2}$,
which implies the contradiction 
that $i+2k+4$ satisfies (\ref{e4}) (replacing $i$ with this larger value).
This final contradiction completes the proof.
\end{proof}

The following proof of Theorem \ref{theorem1b}
is similar to the general approach from \cite{fumamu}.

\begin{proof}[Proof of Theorem \ref{theorem1b}]
Let $G$ be as in the statement.
Let $u$, $V_i$, and $n_i$ be as in the proof of Theorem \ref{theorem1}.
\begin{claim}\label{claim1b}
\begin{eqnarray}\label{e3c}
n_{i-1} + n_i + n_{i+1} & \geq & 9\mbox{ for every }i \in \{ 1,\ldots ,d-1 \}.
\end{eqnarray}
\end{claim}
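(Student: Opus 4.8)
The plan is to fix $i\in\{1,\dots,d-1\}$, write $a=n_{i-1}$, $b=n_i$, $c=n_{i+1}$, and derive a contradiction from the assumption $a+b+c\le 8$. Two structural inputs drive everything. First, $\lambda\ge 4$ forces $\delta\ge 4$. Second, since no edge of $G$ joins two non-consecutive layers, the edges between any two consecutive layers $V_j$ and $V_{j+1}$ form an edge cut of $G$ separating $V_0\cup\cdots\cup V_j$ from $V_{j+1}\cup\cdots\cup V_d$; hence $e(V_j,V_{j+1})\ge\lambda\ge 4$ for every $j\in\{0,\dots,d-1\}$, and in particular $e(V_{i-1},V_i)\ge 4$ and $e(V_i,V_{i+1})\ge 4$. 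Finally, as $G$ is $C_4$-free, each bipartite graph between consecutive layers is $C_4$-free, so any two vertices on one side have at most one common neighbour on the other side.

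First I would use the $C_4$-free condition to cap, for small part sizes, the number of edges a consecutive bipartite graph can carry: between parts of sizes $1$ and $q$ there are at most $q$ edges, between sizes $2$ and $2$ at most $3$, and between sizes $2$ and $3$ at most $4$. Comparing these elementary caps with $e(V_{i-1},V_i)\ge 4$ and $e(V_i,V_{i+1})\ge 4$ eliminates most triples outright: organising the analysis by the value of $b$, one sees that $b=1$ already forces $a\ge 4$ and $c\ge 4$, whence $a+b+c\ge 9$, and that for each $b\in\{2,\dots,6\}$ only a short list of residual triples with $a+b+c\le 8$ survives, such as $(3,2,3)$, $(2,3,2)$, $(2,3,3)$, $(1,4,1)$, $(1,5,2)$, $(1,6,1)$, together with their left--right mirror images.

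It remains to rule out each residual triple, which I would do by a case analysis in the spirit of Claim \ref{claim1}, now exploiting $\delta\ge 4$ and $\lambda\ge 4$ together. For each surviving triple, $\delta\ge 4$ forces every vertex of $V_i$ to place its missing degree inside $V_i$ or on the few available cross edges, and the ``at most one common neighbour'' property then pins down how those edges can sit; in each configuration one locates either a vertex of degree at most $3$, a forbidden $C_4$, or an edge cut of size at most $3$. For example, when $a=1$ the unique vertex of $V_{i-1}$ is a common neighbour of all its neighbours in $V_i$, so no vertex of $V_{i+1}$ can be adjacent to two of them; counting the resulting edges into $V_{i+1}$ against the bound $e(V_i,V_{i+1})\ge 4$ disposes of the triples with $a=1$ apart from the tightest ones. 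The boundary index $i=1$, where $a=n_0=1$ and $\delta\ge 4$ already gives $b=n_1\ge 4$, falls under this same observation, and $i=d-1$ is symmetric.

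The main obstacle will be the balanced interior triples with moderate $b$, namely $(3,2,3)$, the $(2,3,\cdot)$ family, and the residual tight $a=1$ (or $c=1$) cases such as $(1,5,2)$, where the pure counting inequalities are exactly tight and only genuine use of the edge-connectivity hypothesis yields a contradiction: one must trace, as in Claim \ref{claim1}(f),(g), exactly how the few edges can be arranged without creating a $C_4$, and then exhibit a $2$- or $3$-edge cut. Keeping this bookkeeping manageable, by using the symmetry between $V_{i-1}$ and $V_{i+1}$ so that mirror triples are treated only once, is the principal organisational difficulty.
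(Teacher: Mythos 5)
Your skeleton matches the paper's proof: fix $i$, organize the analysis by the value of $n_i$, use that the edges between consecutive layers form a cut of size at least $\lambda\geq 4$, apply elementary $C_4$-free caps on bipartite edge counts to eliminate most triples, and use the observation that $n_{i-1}=1$ forces every vertex of $V_{i+1}$ to have at most one neighbour in $V_i$. Those parts are correct. In fact your $a=1$ observation does strictly more than you credit it with: if $a=1$, the cut condition forces the unique vertex of $V_{i-1}$ to have at least $4$ neighbours in $V_i$ (so $b\geq 4$), and since each vertex of $V_{i+1}$ has at most one neighbour in $V_i$ we get $c\geq e(V_i,V_{i+1})\geq 4$, so $a+b+c\geq 9$. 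Hence \emph{all} triples with $a=1$ or $c=1$, including $(1,5,2)$ and $(1,6,1)$, are completely finished by this remark; they are not ``residual tight cases'' requiring a deeper edge-connectivity argument, contrary to what you state.

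The genuine gap is that the cases carrying the actual content of the claim are never proved. After the elementary eliminations, the surviving triples are $(3,2,3)$, $(2,3,2)$, $(2,3,3)$ (with its mirror), and $(2,4,2)$; for these you only assert that a case analysis ``in the spirit of Claim \ref{claim1}'' will locate a low-degree vertex, a $C_4$, or a small edge cut. That assertion is precisely what must be demonstrated, and it is not routine. In the paper, ruling out $(2,3,3)$ requires pinning down how the two vertices of $V_{i-1}$ attach to $V_i$, bounding the edges inside $V_i$ by $C_4$-freeness, and then a degree count (using $\delta\geq\lambda\geq 4$) that produces a $C_4$ spanning three layers; ruling out $(2,4,2)$ is the longest argument in the entire proof, splitting into subcases according to the neighbourhood of the second vertex of $V_{i+1}$ and ending with either a $C_4$ inside $V_i$ or a vertex of degree at most $3$. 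Compounding this, your enumeration of residual triples omits $(2,4,2)$ altogether --- the one case demanding the most work --- so the plan as written would never even reach it. Only $(3,2,3)$ is genuinely quick: four cut edges on each side force a common neighbour of the two vertices of $V_i$ in $V_{i-1}$ and another in $V_{i+1}$, giving a $C_4$. As it stands, the proposal is a correct strategic outline, identical in spirit to the paper's proof, but the decisive configuration analysis is missing.
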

\begin{proof}[Proof of Claim \ref{claim1b}]
Let $i \in \{ 1,\ldots ,d-1 \}$.
We consider different cases according to the value of $n_i$.

First, we assume that $n_i=1$.
Since $\lambda\geq 4$, we have $n_{i-1},n_{i+1}\geq 4$, which implies (\ref{e3c}).

Next, we assume that $n_i=2$.
This implies $n_{i-1},n_{i+1}\geq 3$, since, otherwise,
the at least $4$ edges between $V_i$ and either $V_{i-1}$ or $V_{i+1}$ already yield a $C_4$.
If $n_{i-1}=n_{i+1}=3$,
then some vertex in $V_{i-1}$ as well as some vertex in $V_{i+1}$ is adjacent to both vertices in $V_i$,
which yields a $C_4$.
Hence, $\min\{ n_{i-1},n_{i+1}\}\geq 3$ and $\max\{ n_{i-1},n_{i+1}\}\geq 4$,
which implies (\ref{e3c}).

Next, we assume that $n_i=3$.
Let $V_i=\{ b_1,b_2,b_3\}$.
Since $\lambda\geq 4$, we obtain $n_{i-1},n_{i+1}\geq 2$.
If $(n_{i-1},n_{i+1})=(2,3)$, then some vertex $c_1$ in $V_{i+1}$ has two neighbors, say $b_1$ and $b_3$, in $V_i$.
Since $\lambda\geq 4$ and $G$ is $C_4$-free, 
one vertex in $V_{i-1}$ is adjacent to $b_1$ and $b_2$,
and 
the other vertex in $V_{i-1}$ is adjacent to $b_2$ and $b_3$.
Since $G$ is $C_4$-free, there is at most one edge with both endpoints in $V_i$.
Since $\delta\geq\lambda\geq 4$, this implies the existence of at least four edges between $V_i$ and $V_{i+1}$
that are distinct from the two edges $b_1c_1$ and $b_3c_1$,
which easily implies the contradiction that $G$ contains a $C_4$
using one vertex in $V_{i-1}$, two vertices in $V_i$, and one vertex in $V_{i+1}$.
Similarly, the assumption $(n_{i-1},n_{i+1})\in \{ (2,2),(3,2)\}$ yields a contradiction.
Hence, we obtain $n_{i-1}+n_{i+1}\geq 6$,
which implies (\ref{e3c}).

Next, we assume that $n_i=4$.
If $n_{i-1}=1$, then no vertex in $V_{i+1}$ has two neighbors in $V_i$, which implies $n_{i+1}\geq 4$,
and, hence, (\ref{e3c}).
Similarly, if $n_{i+1}=1$, then (\ref{e3c}) follows.
Suppose, for a contradiction, that $(n_{i-1},n_{i+1})=(2,2)$.
Let $V_{i-1}=\{ a_1,a_2\}$, 
$V_i=\{ b_1,b_2,b_3,b_4\}$, and
$V_{i+1}=\{ c_1,c_2\}$.
Since $\lambda\geq 4$ and $G$ is $C_4$-free, 
we may assume that 
$c_1$ is adjacent to $b_1$ and $b_2$,
$b_1$ is adjacent to $a_1$, and
$b_2$ is adjacent to $a_2$.
Since every vertex in $V_i$ has a neighbor in $V_{i-1}$,
this implies that $c_1$ has no further neighbor in $V_i$.
Hence, the vertex $c_2$ has exactly two neighbors in $V_i$.
First, suppose that $c_2$ is adjacent to $b_3$ and $b_4$.
By symmetry, we may assume that 
$b_3$ is adjacent to $a_1$, and
$b_4$ is adjacent to $a_2$.
Now, $b_2$ has two neighbors in $V_i$.
Since $G$ is $C_4$-free, this implies that $b_2$ is adjacent to $b_1$ and $b_4$.
Symmetrically, it follows that $b_3$ is adjacent to $b_4$ and $b_1$,
which implies the contradiction that $G$ contains a $C_4$ completely within $V_i$.
Next, suppose that $c_2$ is adjacent to $b_2$ and $b_3$.
Since $\lambda\geq 4$ and $G$ is $C_4$-free, 
the vertex $b_4$ has at most one neighbor in $V_i$, and at most one neighbor in $V_{i+1}$.
Hence, $b_4$ is adjacent to $a_1$ and $a_2$.
Since $G$ is $C_4$-free, $b_4$ has no neighbor in $V_{i+1}$,
which implies the contradiction that $b_4$ has degree at most $3$.
Altogether, we obtain $n_{i-1}+n_{i+1}\geq 5$, 
which implies (\ref{e3c}).

Next, we assume that $n_i\in \{ 5,6\}$.
If $(n_{i-1},n_{i+1})$ is $(1,1)$, $(1,2)$, or $(2,1)$, then some vertex in $V_{i+1}$ has two neighbors in $V_i$
that have a common neighbor in $V_{i-1}$, which is a contradiction.
This implies (\ref{e3c}) in this case.

Finally, since $n_{i-1}+n_{i+1}\geq 2$, 
the inequality (\ref{e3c}) is trivial for $n_i\geq 7$,
which completes the proof of the claim.
\end{proof}
Since $\lambda \geq 4$ and $G$ is $C_4$-free, 
we have $n_{d-1}+ n_d\geq 5$.
Now, together with Claim \ref{claim1b} this implies that
\begin{itemize}
\item 
if $d\equiv 2$ mod $3$,
then $n=\underbrace{(n_0+n_1+n_2)}_{\geq 9}
+\underbrace{(n_3+n_4+n_5)}_{\geq 9}+\cdots+
\underbrace{(n_{d-2}+n_{d-1}+n_d)}_{\geq 9}
\geq 9(d+1)/3$, 
\item if $d\equiv 0$ mod $3$,
then $n=(n_0+n_1+n_2)+\cdots+(n_{d-3}+n_{d-2}+n_{d-1})+\underbrace{n_d}_{\geq 1}
\geq 9d/3+1$, and
\item if $d\equiv 1$ mod $3$,
then $n=(n_0+n_1+n_2)+\cdots+(n_{d-4}+n_{d-3}+n_{d-2})+\underbrace{(n_{d-1}+n_d)}_{\geq 5}
\geq 9(d-1)/3+5$.
\end{itemize}
In all three cases, we obtain (\ref{e3b}).
\end{proof}

For the proof of Theorem \ref{lemma2},
we need to explain the construction behind (\ref{e1b}).

Let $q$ be an odd prime power, and let $\mathbb{F}_q$ denote the field of order $q$.
Two non-zero vectors from $\mathbb{F}_q^3$
are considered {\it equivalent} if they generate the same one-dimensional subspace of $\mathbb{F}_q^3$, that is, they are non-zero multiples of each other.
The {\it Brown graph} $B(q)$ has as its vertices the equivalence classes $[x]$ 
of the non-zero vectors $x$ from $\mathbb{F}_q^3$, 
and two distinct vertices $[x]$ and $[y]$ are adjacent exactly if $xy^T=0$.
This graph 
was proposed independently by Brown \cite{br} 
and Erd\H{o}s and R\'{e}nyi \cite{erre}.
It is a dense $C_4$-free graph with the following properties \cite{basi}:
\begin{itemize}
\item The vertex set $V(B(q))$ of $B(q)$ can be partitioned into two sets $W$ and $V$,
where 
$$W=\{ [x]:x\in \mathbb{F}_q^3\setminus \{ 0\}\mbox{ with }xx^T=0\}$$
contains the so-called {\it quadric} vertices.
The vertices in $W$ have degree $q$ and the vertices in $V$ have degree $q+1$.
Furthermore, 
the order of $B(q)$ is $q^2+q+1$,
$|W|=q+1$, and, hence, $|V|=q^2$.
\item $W$ is an independent set, and 
every vertex from $V$ has exactly two or zero neighbors in $W$.
\item No vertex from $W$ lies on a triangle.
\item Every two non-adjacent vertices 
as well as every two vertices from $V$ (adjacent or not) 
have exactly one common neighbor.
\end{itemize}
For the construction of the graph $G$ in (\ref{e1b}), 
Erd\H{o}s et al.~\cite{erpapotu} modify $B(q)$ as follows:
Let $c$ be a quadric vertex of $B(q)$, and let $a$ and $b$ be two neighbors of $c$.
Since $c$ is quadric, 
the vertices $a$ and $b$ are not adjacent and both of degree $q+1$.

Let 
\begin{eqnarray*}
A&=&N_{B(q)}(a)\setminus N_{B(q)}(b)=\{ a_0,a_1,\ldots,a_{q-1}\}\mbox{ and }\\
B&=&N_{B(q)}(b)\setminus N_{B(q)}(a)=\{ b_0,b_1,\ldots,b_{q-1}\}
\end{eqnarray*}
be such that $a_0$ and $b_0$ are the second quadric neighbors of $a$ and $b$ distinct from $c$, respectively.
Since 
no quadric vertex lies on a triangle,
every vertex in $A$ has exactly one common neighbor with $b$, and 
every vertex in $B$ has exactly one common neighbor with $a$,
the edges of $B(q)$ between $A$ and $B$ form a perfect matching $M$ between these two sets.
Possibly by renaming vertices, we may assume that 
$$M=\{ a_0b_1,a_1b_0\}\cup \{ a_2b_2,a_3b_3,\ldots,a_{q-1}b_{q-1}\}.$$
For every $i\in [q-1]$, the two non-quadric vertices $a_i$ and $b_i$ have exactly one common neighbor $c_i$.
By the properties of $B(q)$, the vertices $c_i$ are all distinct, and do not belong to $\{ c\}\cup A\cup B$.

Let 
$$C=\{ c_1,\ldots,c_{q-1}\}.$$
Let the graph $H$ arise from $B(q)$ by removing the quadric vertex $c$ as well as all edges from $M$,
cf.~Figure \ref{fig2}.

\begin{figure}[H]
\begin{center}
%TeXCAD Picture [2.pic]. Options:
%\grade{\on}
%\emlines{\off}
%\epic{\off}
%\beziermacro{\on}
%\reduce{\on}
%\snapping{\on}
%\pvinsert{% Your \input, \def, etc. here}
%\quality{8.000}
%\graddiff{0.005}
%\snapasp{1}
%\zoom{8.0000}
\unitlength 1mm % = 2.845pt
\linethickness{0.4pt}
\ifx\plotpoint\undefined\newsavebox{\plotpoint}\fi % GNUPLOT compatibility
\begin{picture}(46,84)(0,0)
\put(8,10){\circle*{1}}
\put(8,30){\circle*{1}}
\put(8,40){\circle*{1}}
\put(8,50){\circle*{1}}
\put(8,60){\circle*{1}}
\put(8,80){\circle*{1}}
\put(38,10){\circle*{1}}
\put(38,30){\circle*{1}}
\put(38,40){\circle*{1}}
\put(38,50){\circle*{1}}
\put(38,60){\circle*{1}}
\put(38,80){\circle*{1}}
\put(23,5){\circle*{1}}
\put(23,25){\circle*{1}}
\put(23,35){\circle*{1}}
\put(23,45){\circle*{1}}
\put(8,10){\line(1,0){30}}
\put(8,30){\line(1,0){30}}
\put(8,40){\line(1,0){30}}
\put(38,10){\line(-3,-1){15}}
\put(38,30){\line(-3,-1){15}}
\put(38,40){\line(-3,-1){15}}
\put(38,50){\line(-3,-1){15}}
\put(23,5){\line(-3,1){15}}
\put(23,25){\line(-3,1){15}}
\put(23,35){\line(-3,1){15}}
\put(23,45){\line(-3,1){15}}
\put(4,10){\makebox(0,0)[cc]{\footnotesize $a_{q-1}$}}
\put(4,30){\makebox(0,0)[cc]{$a_3$}}
\put(4,40){\makebox(0,0)[cc]{$a_2$}}
\put(4,50){\makebox(0,0)[cc]{$a_1$}}
\put(4,60){\makebox(0,0)[cc]{$a_0$}}
\put(4,80){\makebox(0,0)[cc]{$a$}}
\put(42,10){\makebox(0,0)[cc]{\footnotesize $b_{q-1}$}}
\put(42,30){\makebox(0,0)[cc]{$b_3$}}
\put(42,40){\makebox(0,0)[cc]{$b_2$}}
\put(42,50){\makebox(0,0)[cc]{$b_1$}}
\put(42,60){\makebox(0,0)[cc]{$b_0$}}
\put(42,80){\makebox(0,0)[cc]{$b$}}
\put(27.2,3.8){\makebox(0,0)[cc]{\footnotesize $c_{q-1}$}}
\put(27,24){\makebox(0,0)[cc]{$c_3$}}
\put(27,34){\makebox(0,0)[cc]{$c_2$}}
\put(27,44){\makebox(0,0)[cc]{$c_1$}}
\put(38,20){\makebox(0,0)[cc]{$\vdots$}}
\put(8,20){\makebox(0,0)[cc]{$\vdots$}}
\put(23,15){\makebox(0,0)[cc]{$\vdots$}}
\put(8,60){\line(3,-1){30}}
\put(38,60){\line(-3,-1){30}}
\put(23,80){\circle*{1}}
\put(23,84){\makebox(0,0)[cc]{$c$}}
\put(23,80){\line(1,0){15}}
\put(23,80){\line(-1,0){15}}
\put(0,7){\framebox(11,56)[cc]{}}
\put(35,7){\framebox(11,56)[cc]{}}
\put(16,2){\framebox(14,45)[cc]{}}
\put(5,3){\makebox(0,0)[cc]{$A$}}
\put(41,3){\makebox(0,0)[cc]{$B$}}
\put(23,-2){\makebox(0,0)[cc]{$C$}}
\put(8,80){\line(0,-1){17}}
\put(38,80){\line(0,-1){17}}
\end{picture}
\end{center}
\caption{The structure of $B(q)$ around $\{ a,b,c\}\cup A\cup B\cup C$; 
the figure shows all edges except for possible edges within each of the sets $A$, $B$, and $C$.}\label{fig2}
\end{figure}
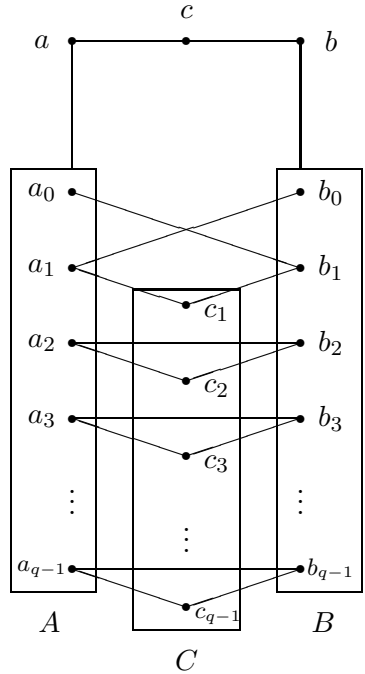
The idea behind the construction of $H$ is to destroy all paths in $B(q)$ 
of length at most two between $a$ and $b$
increasing their distance to four.
Now, the graph $G$ considered for (\ref{e1b}) 
arises from the disjoint union of copies $H_1,\ldots,H_k$ of $H$
by adding edges between the vertex $b$ from $H_i$
and the vertex $a$ from $H_{i+1}$ for every $i\in [k-1]$.
The graph $G$ has minimum degree $q-1$,
diameter $5k-1$, and 
order $(q^2+q)k$,
which implies (\ref{e1b}).
Note that $G$ contains bridges for $k\geq 2$, that is, 
while its minimum degree is large, it is not even $2$-edge-connected
regardless of the value of $q$.

For the proof of Theorem \ref{lemma2},
we consider the edge-connectivity of $H$.

\begin{lemma}\label{lemma1}
The graph $H$ is $(q-6)$-vertex-connected.
\end{lemma}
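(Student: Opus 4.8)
The plan is to establish the equivalent Menger statement: any two non-adjacent vertices $x,y$ of $H$ are joined by at least $q-6$ internally disjoint paths (note $|V(H)|=q^2+q>q-6$, and one checks directly that $\delta(H)=q-1$, since only the quadric vertices $a_0,b_0$ lose a matching edge and drop to degree $q-1$). I would first carry out the path count in $B(q)$ itself, using only its $C_4$-freeness (so any two vertices have at most one common neighbour) together with the stated property that any two \emph{non-adjacent} vertices have exactly one common neighbour. For non-adjacent $x,y$, assign to each $x'\in N_{B(q)}(x)\setminus\big(N_{B(q)}(y)\cup\{y\}\big)$ the unique common neighbour $\phi(x')$ of $x'$ and $y$, yielding a path $x\,x'\,\phi(x')\,y$ of length at most three. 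If $\phi(x_1')=\phi(x_2')=z$, then $x_1',x_2'\in N(x)\cap N(z)$, so $C_4$-freeness forces $x_1'=x_2'$; hence $\phi$ is injective, and since $x'\notin N(y)$ while $\phi(x')\in N(y)$ the paths share no internal vertices. As $x,y$ have exactly one common neighbour this produces $\deg_{B(q)}(x)-1\ge q-1$ internally disjoint $x$–$y$ paths, so $B(q)$ is $(q-1)$-connected.

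The second step is to transfer these paths to $H=B(q)-c-M$ by discarding those meeting the deleted vertex $c$ or using a deleted matching edge, and bounding the number discarded. At most two paths are lost to $c$ (once with $x'=c$, once with $\phi(x')=c$); moreover, since $c$ is quadric and lies on no triangle, no vertex of $A\cup B$ is adjacent to $c$, so both of these losses vanish as soon as $x$ or $y$ lies in $A\cup B$. For the middle edge $x'\phi(x')$ to lie in $M\subseteq A\times B$ one needs $x'\in A\cup B$; because $A\subseteq N(a)$ and $B\subseteq N(b)$, the common-neighbour bound gives $|N(x)\cap A|\le 1$ and $|N(x)\cap B|\le 1$ whenever $x\notin\{a,b\}$, costing at most two further paths. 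Finally the end edges $x\,x'$ and $\phi(x')\,y$ can lie in $M$ only if $x$, respectively $y$, itself belongs to $A\cup B$, costing at most one path each. Tallying these mutually constrained losses (the $c$-losses excluded exactly when the end-edge losses are possible) keeps the total at most five for every pair with $x,y\notin\{a,b\}$, leaving at least $q-6$ surviving internally disjoint paths.

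It remains to treat the pairs touching the modified region, and this is where I expect the main obstacle to lie, since the middle-edge bound $|N(x)\cap A|\le1$ breaks down precisely when $x\in\{a,b\}$ (as $N(a)\supseteq A$). Here a direct argument is needed, and it is favourable: in $H$ the vertices $a$ and $b$ have no common neighbour, but the $q-1$ paths $a\,a_i\,c_i\,b_i\,b$ for $i\in[q-1]$ are internally disjoint, so $a$ and $b$ are in fact $(q-1)$-connected, and a symmetric inspection handles the remaining pairs with an endpoint in $\{a,b\}$ or in $A\cup B\cup C$, including the formerly matched pairs (now non-adjacent) via their surviving length-two connectors $a_i\,c_i\,b_i$. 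The genuinely delicate part is therefore not the global count but the finite bookkeeping around $\{a,b,c\}\cup A\cup B\cup C$ and the explicit form of $M$, which is exactly where the additive constant $6$ is consumed; I would organise this as a short case analysis over the location of $x$ and $y$ relative to these $O(q)$ special vertices, confirming that in each case at least $q-6$ of the $B(q)$-paths survive or are replaced by equally many paths through the $c_i$.
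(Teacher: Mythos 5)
Your opening reduction is sound: since $H$ is far from complete, $(q-6)$-vertex-connectivity follows once every pair of vertices non-adjacent \emph{in $H$} is joined by $q-6$ internally disjoint paths. For pairs that are non-adjacent already in $B(q)$ your length-three construction and loss accounting are essentially correct (in fact at most \emph{one} path can pass through $c$, because the paths are internally disjoint, so you get $q-5$ there, matching the paper's count for this case), and you handle the pair $\{a,b\}$ exactly as the paper does. The reduction is even attractive, since it would let you skip the pairs that remain adjacent in $H$, such as $x\in A$, $y=a$, which cost the paper two delicate subcases.

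However, there is a genuine gap, and it sits exactly where your reduction creates new work: the pairs that are adjacent in $B(q)$ but non-adjacent in $H$, namely the endpoints $\{a_i,b_j\}$ of the deleted matching edges of $M$. For such a pair your core mechanism produces nothing. Taking $x=a_i$, $y=b_j$ with $a_ib_j\in M$, every candidate $x'\in N_{B(q)}(x)\setminus\big(N_{B(q)}(y)\cup\{y\}\big)$ has $x$ itself as a common neighbour of $x'$ and $y$ in $B(q)$ ($x$ is adjacent to $x'$ by the choice of $x'$, and to $y$ via the matching edge), so by uniqueness of common neighbours $\phi(x')=x$, and the walk $x\,x'\,\phi(x')\,y$ is degenerate rather than a path. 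Hence \emph{all} of your length-three paths vanish for matched pairs, and the fallback you mention --- the surviving connector $a_i\,c_i\,b_j$ --- is a single path, not $q-6$ of them. This is not ``finite bookkeeping around the special vertices''; it is precisely the case that consumes the constant $6$. To close the gap you need the length-four construction that the paper uses for pairs adjacent in $B(q)$: choose $X\subseteq N_{B(q)}(x)\setminus N_{B(q)}[y]$ and $Y\subseteq N_{B(q)}(y)\setminus N_{B(q)}[x]$, each of size $q-1$; observe that $C_4$-freeness forbids edges between $X$ and $Y$; pair each $x_i$ with $y_i$ through their unique common neighbour $z_i$; and use the internally disjoint paths $x\,x_i\,z_i\,y_i\,y$, of which at most one meets $c$ and at most four use an edge of $M$, leaving the required $q-6$.
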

\begin{proof}%[Proof of Lemma \ref{lemma1}]
We show the existence of at least $q-6$ internally vertex-disjoint paths 
between any two distinct vertices $x$ and $y$ of $H$. 
In $H$, the two vertices $a$ and $b$ are connected 
by the $q-1$ internally vertex-disjoint paths
$aa_1c_1b_1b,\ldots,aa_{q-1}c_{q-1}b_{q-1}b$, cf.~Figure \ref{fig2}. 

Now, let $\{ x,y\}\not=\{ a,b\}$. 
We consider different cases.

First, we assume that $x$ and $y$ are not adjacent.
By symmetry, we may assume that $d_{B(q)}(x)\leq d_{B(q)}(y)$, and that $x\not\in \{ a,b\}$.
By the properties of $B(q)$,
the set $N_{B(q)}(x)\setminus N_{B(q)}(y)$ 
contains a set $X$ of $q-1$ vertices $x_1,\ldots,x_{q-1}$,
and each vertex $x_i$ from $X$ has exactly one common neighbor $y_i$ with $y$.
Let $Y=\{ y_1,\ldots,y_{q-1}\}$.
By the properties of $B(q)$, the set $Y$ is disjoint from $\{ x,y\}\cup X$, 
and the $q-1$ elements of $Y$ are all distinct.
In $B(q)$, there are the $q-1$ internally vertex-disjoint paths
$$xx_1y_1y,\ldots,xx_{q-1}y_{q-1}y$$ 
between $x$ and $y$.
At most one of these paths contains the vertex $c$.
If four more of these paths contain an edge from $M$,
then three vertices from $X$ are incident with edges from $M$.
Since the edges in $M$ connect neighbors of $a$ with neighbors of $b$,
this implies that at least two vertices from $X$ are neighbors of either $a$ or $b$.
Now, the vertex $x\not\in \{ a,b\}$ shares two neighbors with $a$ or $b$, 
which is a contradiction to the properties of $B(q)$.
Hence, at most three of the above $q-1$ paths contain an edge from $M$, and, 
in $H$, there are at least $q-1-1-3$ internally vertex-disjoint paths between $x$ and $y$.

For the rest of the proof, we may assume that $x$ and $y$ are adjacent.

Next, we assume that $x,y\not\in \{ a,b\}$.
By symmetry, we may assume that $d_{B(q)}(x)\leq d_{B(q)}(y)$,
in particular, the vertex $y$ is not quadric.
By the properties of $B(q)$, regardless of whether $x$ is quadric or not,
\begin{itemize}
\item the set $N_{B(q)}(x)\setminus N_{B(q)}[y]$ 
contains a set $X$ of $q-1$ vertices $x_1,\ldots,x_{q-1}$,
\item the set $N_{B(q)}(y)\setminus N_{B(q)}[x]$ 
contains a set $Y$ of $q-1$ vertices $y_1,\ldots,y_{q-1}$, 
\item there is no edge between $X$ and $Y$, and,
\item for every $i\in [q-1]$, the vertex $x_i$ and the vertex $y_i$ have a unique common neighbor $z_i$.
\end{itemize}
Let $Z=\{ z_1,\ldots,z_{q-1}\}$.
By the properties of $B(q)$,
the $q-1$ elements of $Z$ are all distinct, and 
$Z$ is disjoint from $\{ x,y\}\cup X\cup Y$.
In $B(q)$, there are the $q-1$ internally vertex-disjoint paths
$$xx_1z_1y_1y,\ldots,xx_{q-1}z_{q-1}y_{q-1}y$$ 
between $x$ and $y$.
At most one of these paths contains the vertex $c$.
If five of these paths contain an edge from $M$,
then three vertices from either $X$ or $Y$ are incident edges from $M$.
Similarly as before, this implies that one of the two vertices $x$ and $y$
has two common neighbors with one of the two vertices $a$ and $b$,
which is a contradiction to the properties of $B(q)$.
Hence, at most four of the above $q-1$ paths contain an edge from $M$, and, 
in $H$, there are at least $q-1-1-4$ internally vertex-disjoint paths between $x$ and $y$.

For the rest of the proof, we may assume, by the symmetry between $a$ and $b$, that $y=a$.

As $x$ and $y$ are adjacent, this implies $x\in A$.

First, we assume that $x=a_0$, that is, the vertex $a_0$ 
is the unique second quadric neighbor of $a$ distinct from $c$.
The vertex $x=a_0$ has no neighbor in $\{ b,c\}\cup (B\setminus \{ b_1\})\cup C$.
Let $X=N_{B(q)}(x)\setminus \{ a,b_1\}=\{ x_2,\ldots,x_{q-1}\}$
and $Y=\{ a_2,\ldots,a_{q-1}\}\subseteq A$.
For every $i\in \{ 2,\ldots,q-1\}$,
the vertices $x_i$ and $a_i$ have a unique common neighbor $z_i$.
By the properties of $B(q)$, the vertices $z_2,\ldots,z_{q-1}$ are all distinct.
Note that $z_i$ may coincide with $b_i$ or $c_i$ 
but is distinct from $c$ and $c_j$ for $j\not=i$,
cf.~Figure \ref{fig3} for an illustration.
\begin{figure}[H]
\begin{center}
%TeXCAD Picture [3.pic]. Options:
%\grade{\on}
%\emlines{\off}
%\epic{\off}
%\beziermacro{\on}
%\reduce{\on}
%\snapping{\on}
%\pvinsert{% Your \input, \def, etc. here}
%\quality{8.000}
%\graddiff{0.005}
%\snapasp{1}
%\zoom{6.7272}
\unitlength 1mm % = 2.845pt
\linethickness{0.4pt}
\ifx\plotpoint\undefined\newsavebox{\plotpoint}\fi % GNUPLOT compatibility
\begin{picture}(71,84)(0,0)
\put(33,10){\circle*{1}}
\put(8,10){\circle*{1}}
\put(33,30){\circle*{1}}
\put(8,30){\circle*{1}}
\put(33,40){\circle*{1}}
\put(8,40){\circle*{1}}
\put(33,50){\circle*{1}}
\put(33,60){\circle*{1}}
\put(33,80){\circle*{1}}
\put(63,10){\circle*{1}}
\put(63,30){\circle*{1}}
\put(63,40){\circle*{1}}
\put(63,50){\circle*{1}}
\put(63,60){\circle*{1}}
\put(63,80){\circle*{1}}
\put(48,5){\circle*{1}}
\put(48,25){\circle*{1}}
\put(48,35){\circle*{1}}
\put(48,45){\circle*{1}}
\put(33,10){\line(1,0){30}}
\put(33,30){\line(1,0){30}}
\put(33,40){\line(1,0){30}}
\put(63,10){\line(-3,-1){15}}
\put(63,30){\line(-3,-1){15}}
\put(63,40){\line(-3,-1){15}}
\put(63,50){\line(-3,-1){15}}
\put(48,5){\line(-3,1){15}}
\put(48,25){\line(-3,1){15}}
\put(48,35){\line(-3,1){15}}
\put(48,45){\line(-3,1){15}}
\put(29,80){\makebox(0,0)[cc]{$a$}}
\put(67,80){\makebox(0,0)[cc]{$b$}}
\put(63,20){\makebox(0,0)[cc]{$\vdots$}}
\put(33,20){\makebox(0,0)[cc]{$\vdots$}}
\put(8,20){\makebox(0,0)[cc]{$\vdots$}}
\put(48,15){\makebox(0,0)[cc]{$\vdots$}}
\put(33,60){\line(3,-1){30}}
\put(63,60){\line(-3,-1){30}}
\put(48,80){\circle*{1}}
\put(48,84){\makebox(0,0)[cc]{$c$}}
\put(48,80){\line(1,0){15}}
\put(48,80){\line(-1,0){15}}
\put(25,7){\framebox(11,56)[cc]{}}
\put(60,7){\framebox(11,56)[cc]{}}
\put(41,2){\framebox(14,45)[cc]{}}
\put(30,3){\makebox(0,0)[cc]{$A$}}
\put(66,3){\makebox(0,0)[cc]{$B$}}
\put(48,-2){\makebox(0,0)[cc]{$C$}}
\put(33,80){\line(0,-1){17}}
\put(63,80){\line(0,-1){17}}
\put(1,7){\framebox(14,36)[cc]{}}
\put(8,3){\makebox(0,0)[cc]{$X$}}
\put(20,40){\circle*{1}}
\put(8,40){\line(1,0){12}}
\put(20,40){\line(1,0){13}}
\qbezier(8,30)(58.5,12)(63,30)
\qbezier(8,10)(26,-7.5)(48,5)
\put(20,37){\makebox(0,0)[cc]{$z_2$}}
\put(8,37){\makebox(0,0)[cc]{$x_2$}}
\put(33,37){\makebox(0,0)[cc]{$a_2$}}
\put(8,27){\makebox(0,0)[cc]{$x_3$}}
\put(67,31){\makebox(0,0)[cc]{$z_3$}}
\put(52,4){\makebox(0,0)[cc]{\footnotesize $z_{q-1}$}}
\put(8,13){\makebox(0,0)[cc]{$x_{q-1}$}}
\end{picture}
\end{center}
\caption{Local configuration for the case $(x,y)=(a_0,a)$ 
illustrating different options for the common neighbor $z_i$ of $x_i$ and $a_i$
for $i\in \{ 2,3,q-1\}$.}\label{fig3}
\end{figure}
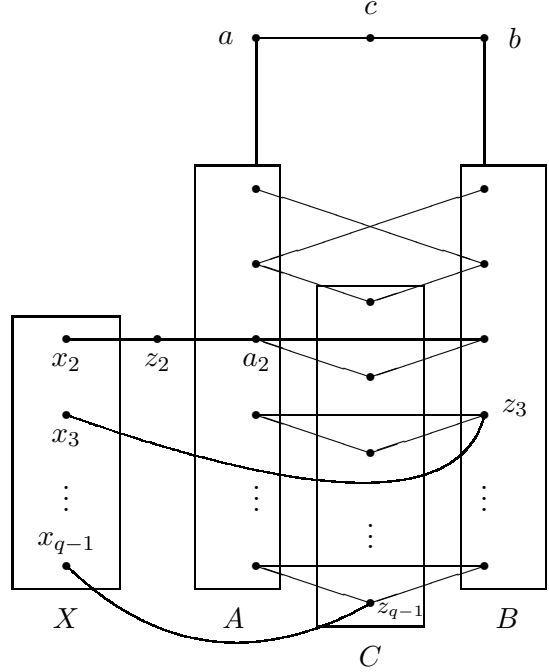
If $z_i=b_i$, then 
let $P_i$ be the path $xx_ib_ic_ia_iy$ avoiding the edge $a_ib_i$ from $M$,
otherwise, 
let $P_i$ be the path $xx_iz_ia_iy$.
In $H$, there are the $q-2$ internally vertex-disjoint paths
$P_2,\ldots,P_{q-1}$ between $x$ and $y$.

Finally, we assume that $x=a_i$ for some $i\in [q-1]$.
The vertex $x=a_i$ has 
exactly one neighbor $b_j$ in $B$,
exactly one neighbor $c_i$ in $C$, 
exactly one neighbor in $A$ --- its unique common neighbor with $a$, and 
no neighbor in $\{ b,c\}$.
Hence, the set 
$X=N_{B(q)}(x)\setminus (\{ a,b_j,c_i\}\cup A)$
contains $q-3$ vertices $x_3,\ldots,x_{q-1}$.
Let $Y=\{ a_3,\ldots,a_{q-1}\}\subseteq A$.
For every $i\in \{ 3,\ldots,q-1\}$,
the vertices $x_i$ and $a_i$ have a unique common neighbor $z_i$.
By the properties of $B(q)$, the vertices $z_3,\ldots,z_{q-1}$ are all distinct.
Again, the vertex $z_i$ may coincide with $b_i$ or $c_i$ 
but is distinct from $c$ and $c_j$ for $j\not=i$.
Defining the paths $P_3,\ldots,P_{q-1}$ as above,
we obtain that, in $H$, 
there are the $q-3$ internally vertex-disjoint paths between $x$ and $y$.

This completes the proof.
\end{proof}
Adapting the proof of Lemma \ref{lemma1},
it is not difficult to show that $B(q)$ is actually $q$-vertex-connected.

\begin{proof}[Proof of Theorem \ref{lemma2}]
The corresponding graph arises from the disjoint union of copies $H_1,\ldots,H_k$ of $H$
by identifying the vertex $b$ from $H_i$
with the vertex $a$ from $H_{i+1}$ for every $i\in [k-1]$.
\end{proof}

\section{Conclusion}

Our results motivate several research problems.
In Theorem \ref{theorem1}, 
one could determine the best possible value for the additive constant.
In fact, we believe that the graphs in Figure \ref{fig1} are extremal.
In Theorem \ref{theorem1b},
one could determine the best possible factor.
While, unfortunately, this seems to require a detailed and tedious case analysis
in combination with a non-local argument,
we believe that our approach from Theorem \ref{theorem1} could be adapted.
Finally, and most interestingly, 
one should determine the smallest possible constant $c$ such that 
$d\leq \frac{cn}{\lambda^2+O(\lambda)}$
for every $C_4$-free graph of order $n$, diameter $d$,
and edge-connectivity $\lambda$.
Note that (\ref{e1}) and Theorem \ref{lemma2} imply $4\leq c\leq 5$.

\end{document}